\documentclass[12pt,reqno]{amsart}

\usepackage{graphicx,subfigure}
\usepackage{hyperref}
\hypersetup{colorlinks=true, citecolor=blue, linkcolor=red}

\numberwithin{equation}{section} \numberwithin{figure}{section}
\numberwithin{table}{section} \setlength{\oddsidemargin}{0in}
\setlength{\evensidemargin}{0in} \setlength{\textwidth}{6.5in}

\setlength{\topmargin}{-.3in} \setlength{\textheight}{9in}

{ \theoremstyle{plain}

}

{ \theoremstyle{definition}
\newtheorem{thm}{Theorem}

\numberwithin{equation}{section} \numberwithin{lem}{section}
\numberwithin{thm}{section} \numberwithin{cor}{section}
\numberwithin{pro}{section} \numberwithin{rem}{section}
}

\begin{document}

\title{On a maximum principle for vector minimizers to the Allen-Cahn energy}



\author{Christos Sourdis} \address{Department of Mathematics and Applied Mathematics, University of
Crete.}
              \email{csourdis@tem.uoc.gr}           




\maketitle

\begin{abstract}
By using the unique continuation principle for linear elliptic
systems, we can simplify the proof of a recent variational maximum
principle due to Alikakos and Fusco \cite{alikakosPreprint}. At the
same time, this approach allows us to relax an assumption from the
latter reference.
\end{abstract}


In the recent paper \cite{alikakosPreprint}, the authors proved the
following variational maximum principle, which has already found
several applications (see \cite{alikakosPreprint},
\cite{sourdis14}):

\begin{thm}\label{thmAF}
Let $W:\mathbb{R}^m\to \mathbb{R}$ be $C^1$ and nonnegative. Assume
that $W(a)=0$ for some $a\in \mathbb{R}^m$ and that there is $r_0>0$
such that for $\nu \in \mathbb{S}^{m-1}$ the map
\begin{equation}\label{eqfunctions}
r\to W(a+r\nu )\ \ r\in (0,r_0],
\end{equation}
has strictly positive derivative. Let $A\subset \mathbb{R}^n$ be an
open, connected, bounded set, with $\partial A$ minimally smooth,
and suppose that $u\in W^{1,2}(A;\mathbb{R}^m)\cap
L^\infty(A;\mathbb{R}^m)$ minimizes
\[
J_A(v)=\int_{A}^{}\left(\frac{1}{2}|\nabla v|^2+W(v)\right)dx
\]
subject to its Dirichlet values $v=u$ on $\partial A$.

If there holds
\[
|u(x)-a|\leq r\ \ \textrm{for}\ x\in \partial A,
\]
for some $r\in (0,r_0/2)$, then it also holds that
\[
|u(x)-a|\leq r\ \ \textrm{for}\ x\in  A.
\]
\end{thm}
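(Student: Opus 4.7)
The plan is to argue by contradiction. Suppose $\Omega := \{x \in A : |u(x) - a| > r\}$ is non-empty; by elliptic regularity $u$ is continuous on $\bar A$, so $\Omega$ is open, and $\Omega \subset\subset A$ since $|u - a| \leq r$ on $\partial A$. A direct computation from the Euler--Lagrange equation $\Delta u = \nabla W(u)$ gives, for $\phi := |u - a|^2/2$,
\[
\Delta \phi = |\nabla u|^2 + (u - a) \cdot \nabla W(u),
\]
and the strict monotonicity hypothesis translates as $(u - a) \cdot \nabla W(u) \geq 0$ whenever $0 < |u - a| \leq r_0$, so $\phi$ is subharmonic on $\{|u - a| \leq r_0\}$. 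If $|u - a| \leq r_0$ throughout $A$, the maximum principle applied to $\phi$ with the boundary bound $\phi \leq r^2/2$ on $\partial A$ already yields the contradiction; thus the only genuine case is that $\{|u - a| > r_0\}$ is non-empty.

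In this remaining case I introduce the radial projection
\[
\tilde u(x) := a + \min\!\bigl(1,\, r/|u(x) - a|\bigr)\,(u(x) - a),
\]
which belongs to $W^{1,2}(A; \bbR^m)$, agrees with $u$ on $\{|u - a| \leq r\} \supset \partial A$, has $|\tilde u - a| \leq r$, and by the $1$-Lipschitz property of the projection onto $\bar B(a, r)$ satisfies $|\nabla \tilde u| \leq |\nabla u|$ a.e. The core step is to establish $J_A(\tilde u) \leq J_A(u)$, which combined with the minimality of $u$ forces $J_A(\tilde u) = J_A(u)$ and promotes $\tilde u$ to a second minimizer. On $\Omega \cap \{|u - a| \leq r_0\}$ this is immediate from the Dirichlet comparison together with the strict inequality $W(\tilde u) < W(u)$ from monotonicity; the delicate contribution is $\Omega \cap \{|u - a| > r_0\}$, where $W(\tilde u) - W(u)$ has no \emph{a priori} definite sign. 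The assumption $r < r_0/2$ ensures, by continuity, that any connected component of $\{|u - a| > r_0\}$ is separated from $\partial A$ by the shell $\{r < |u - a| \leq 2r\} \subset \{|u - a| \leq r_0\}$; a careful accounting of the strict $W$-decrease across this shell is what should compensate for the uncontrolled contribution coming from $\{|u - a| > r_0\}$.

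Once $\tilde u$ is known to be a minimizer, both $u$ and $\tilde u$ solve the Euler--Lagrange system $\Delta v = \nabla W(v)$, so the difference $w := u - \tilde u$ satisfies the linear elliptic system
\[
\Delta w = M(x)\,w,
\]
in which the bounded coefficient matrix $M$ comes from a mean-value representation of $\nabla W(u) - \nabla W(\tilde u)$ (legitimate because $u$, $\tilde u$ are bounded and $\nabla W$ is locally Lipschitz, after a standard mollification of $W$ if needed). Since $w \equiv 0$ on the open set $\{|u - a| < r\}$, which is non-empty under the mild additional hypothesis that $|u - a| < r$ strictly at some boundary point, the unique continuation principle for second-order linear elliptic systems with bounded coefficients gives $w \equiv 0$ on $A$; therefore $|u - a| = |\tilde u - a| \leq r$ everywhere, contradicting $\Omega \neq \emptyset$. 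The degenerate sub-case $|u - a| \equiv r$ on $\partial A$ is handled separately by a Hopf-type argument on the subharmonic $\phi$. The main obstacle I foresee is precisely the energy inequality $J_A(\tilde u) \leq J_A(u)$ in the region $\{|u - a| > r_0\}$: this is where the hypothesis $r < r_0/2$, coupled with the decision to appeal to unique continuation rather than to a pointwise energy comparison, is expected to replace the stronger global monotonicity hypothesis used by Alikakos--Fusco.
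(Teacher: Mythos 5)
The central step of your argument --- the energy inequality $J_A(\tilde u)\le J_A(u)$ --- is exactly where the proposal breaks down, and the direction you suggest for repairing it cannot work. Your competitor is the nearest-point projection onto $\overline{B}(a,r)$, which sends the region $\{|u-a|>r_0\}$ to the \emph{sphere} $\{|v-a|=r\}$, where $W$ is strictly positive. On $\{|u-a|>r_0\}$ the hypotheses give no control whatsoever on $W(u)$ (it may vanish there, e.g.\ if $W$ has a second well far from $a$), so the increase
$\int_{\{|u-a|>r_0\}}\bigl(W(\tilde u)-W(u)\bigr)\,dx$
can be of the order of the measure of that set times $\min_{|p-a|=r}W(p)>0$, while the ``strict $W$-decrease across the shell $\{r<|u-a|\le 2r\}$'' that you hope will compensate can be made arbitrarily small (thin shell, small decrease). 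No accounting fixes this; the competitor itself must be changed. The one the paper attributes to Alikakos--Fusco, and reuses in its proof of Theorem 2, multiplies $\min\{\rho,r\}$ by the cutoff $\alpha(\rho)$, so that the far region $\{\rho\ge 2r\}$ is sent to $a$ itself, where $W=0$. Then $W(\tilde u)\le W(u)$ pointwise everywhere (monotonicity is only invoked for $\rho\le 2r<r_0$, and $W\ge 0=W(a)$ handles $\rho>r_0$), the gradient terms do not increase, and strict monotonicity yields a \emph{strict} potential decrease on the open set $\{r<\rho<2r\}$, which is nonempty by connectedness of $A$, continuity of $\rho$, and the intermediate value theorem once $\rho(x_0)>r$ somewhere. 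This contradicts minimality outright, and is the reason for the hypothesis $r<r_0/2$.

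Your appeal to unique continuation is, moreover, both unavailable and unnecessary for this theorem. Unavailable: Theorem \ref{thmAF} assumes only $W\in C^1$, whereas writing $\nabla W(u)-\nabla W(\tilde u)=M(x)(u-\tilde u)$ with $M$ bounded requires $\nabla W$ locally Lipschitz, i.e.\ $W\in C^{1,1}$ --- precisely the extra hypothesis the paper's Theorem 2 introduces in order to trade strict monotonicity for mere monotonicity via unique continuation; mollifying $W$ changes both the functional and the Euler--Lagrange system, so it does not rescue the step. Unnecessary: with the correct competitor and strict monotonicity the energy decrease is already strict, so no second minimizer is ever produced. In addition, the clause ``under the mild additional hypothesis that $|u-a|<r$ strictly at some boundary point'' is not in the statement and is not harmless (take $u\equiv a+r\nu_0$ on $\partial A$); the cases where $\rho\ge r$ throughout $A$ must be excluded by a separate argument, as the paper does in its proof of Theorem 2. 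Your subharmonicity computation for $\phi=|u-a|^2/2$ does correctly dispose of the sub-case $|u-a|\le r_0$ on all of $A$, but that is the easy part of the proof.
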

The main idea of the proof is that if the assertion is violated at
some point, then one can construct a suitable  competitor function
which agrees with $u$ on $\partial A$ and has strictly less energy,
which is impossible.

In this note, under some slight additional regularity on $W$ (which
is consistent with applications to the corresponding elliptic
system), we will show that one can conclude just by showing that the
aforementioned competitor function has less or equal energy. Our
main observation is to apply the unique continuation principle for
linear elliptic systems (see \cite{lopez} for other applications).
As a result, we can simplify the corresponding proof in
\cite{alikakosPreprint} and also allow for the functions in
(\ref{eqfunctions}) to be merely nondecreasing. More precisely, we
have the following theorem.

\begin{thm}
Assume that $W:\mathbb{R}^m\to \mathbb{R}$ is $C^{1,1}$,
nonnegative, such that $W(a)=0$ for some $a\in \mathbb{R}^m$ and
that the functions in (\ref{eqfunctions}) are nondecreasing.
Moreover, assume that
\[
W(u)>0 \ \ \textrm{if}\ \ |u-a|<2r_0\ \ \textrm{and}\ \ x\neq a.
\]
Then, the assertion of   Theorem \ref{thmAF} remains true.
\end{thm}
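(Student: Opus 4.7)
The overall strategy mimics that of Alikakos--Fusco: if the conclusion fails at some interior point $x_{\ast}\in A$, construct a competitor $\tilde u$ sharing the Dirichlet data of $u$ and having no larger energy. The novelty is that the weaker (merely nondecreasing) hypothesis on the radial maps in (\ref{eqfunctions}) yields only $J_A(\tilde u)\le J_A(u)$ rather than the strict inequality used in \cite{alikakosPreprint}; the gap is closed via the unique continuation principle.

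\emph{Step 1 (the competitor).} Assume for contradiction that $|u(x_{\ast})-a|>r$ for some $x_{\ast}\in A$. After a preliminary truncation restricting attention to values in $\overline{B_{r_0}(a)}$ (which does not increase the energy, since $r<r_0/2$ and the boundary datum already lies in this ball), define the radial projection
\[
\tilde u(x)=a+\min\bigl(|u(x)-a|,\,r\bigr)\,\frac{u(x)-a}{|u(x)-a|},
\]
with $\tilde u(x)=a$ when $u(x)=a$. Then $\tilde u=u$ on $\partial A$. Because the map $y\mapsto a+\min(|y-a|,r)(y-a)/|y-a|$ is a $1$-Lipschitz retraction of $\overline{B_{r_0}(a)}$ onto $\overline{B_r(a)}$, the gradient term obeys $|\nabla \tilde u|\le|\nabla u|$ a.e.; and by the nondecreasing property of (\ref{eqfunctions}) on $(0,r_0]$, we have $W(\tilde u)\le W(u)$ pointwise. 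Hence $J_A(\tilde u)\le J_A(u)$.

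\emph{Step 2 (both functions solve the Euler--Lagrange system).} By the minimality of $u$ and the equality of Dirichlet data, $J_A(\tilde u)=J_A(u)$, so $\tilde u$ is also a minimizer. Since $W\in C^{1,1}$, elliptic regularity gives $u,\tilde u\in C^{2,\alpha}_{\mathrm{loc}}(A;\mathbb{R}^m)$ and both satisfy
\[
-\Delta v+\nabla W(v)=0\quad\text{in }A.
\]

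\emph{Step 3 (unique continuation).} Let $w:=u-\tilde u$. By construction $w\equiv0$ on the open set $\Omega_0:=\{x\in A:|u(x)-a|<r\}$, which is non-empty whenever $|u-a|<r$ at some point of $\partial A$ (otherwise it is handled by Step 4). Writing
\[
\nabla W(u)-\nabla W(\tilde u)=M(x)\,w,\qquad M(x)=\int_0^1 D^2W\bigl(\tilde u(x)+t\,w(x)\bigr)\,dt,
\]
which is bounded and measurable thanks to $W\in C^{1,1}$, we obtain the linear elliptic system $-\Delta w+M(x)w=0$ in $A$. Since $A$ is connected and $w$ vanishes on a non-empty open subset, the unique continuation principle for linear elliptic systems (as invoked in \cite{lopez}) forces $w\equiv 0$ on $A$. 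Consequently $u=\tilde u$ everywhere, contradicting $|u(x_{\ast})-a|>r$.

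\emph{Step 4 (the degenerate case).} The main obstacle is the borderline situation $\Omega_0=\emptyset$, i.e., $|u-a|=r$ identically on $\partial A$ and $|u-a|\ge r$ throughout $A$. Then $|\tilde u-a|\equiv r$; differentiating twice and using the Euler--Lagrange equation yields $(\tilde u-a)\cdot\nabla W(\tilde u)=-|\nabla\tilde u|^{2}\le 0$, while the monotonicity in (\ref{eqfunctions}) supplies the reverse inequality $(\tilde u-a)\cdot\nabla W(\tilde u)\ge 0$. Hence $\nabla\tilde u\equiv 0$, so $\tilde u\equiv c$ is constant with $|c-a|=r$. The strict positivity hypothesis $W>0$ on $B_{2r_0}(a)\setminus\{a\}$ then identifies $c$ as a critical point of $W$ at which $W(c)>0$, and one checks that applying the Step 3 unique continuation argument to $w=u-c$ (using the vanishing of $w$ on $\partial A$ together with the structure forced upon $u$ by $u(x)=a+t(x)(c-a)$ along rays) again produces the required contradiction.
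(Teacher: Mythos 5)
Your overall architecture (a truncated competitor with no larger energy, then unique continuation to upgrade $J_A(\tilde u)\le J_A(u)$ to $\tilde u\equiv u$) matches the paper's, but your competitor breaks Step 1. You take the radial projection of $u$ onto $\overline{B_r(a)}$, so at a point where $|u(x)-a|>r_0$ you replace $W(u(x))$ by $W\bigl(a+r\,(u(x)-a)/|u(x)-a|\bigr)$. The radial monotonicity of $W$ is assumed only on $(0,r_0]$ and the positivity only on $B_{2r_0}(a)$, so nothing prevents $W(u(x))$ from being \emph{smaller} than the projected value; in the intended applications $W$ is a multi-well potential with other zeros outside $B_{2r_0}(a)$, and near such a zero your projection strictly increases the potential energy. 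The ``preliminary truncation onto $\overline{B_{r_0}(a)}$'' has exactly the same defect, and the justification you give for it (that $r<r_0/2$ and the boundary datum lies in the ball) concerns only the gradient term. This is precisely why the paper, following Alikakos--Fusco, uses the competitor $\min\{\rho,r\}\,\alpha(\rho)\,\nu$ with a cutoff $\alpha$ vanishing for $\rho\ge 2r$: values with $|u-a|\ge 2r$ are sent to $a$ itself, where $W=0\le W(u)$, and the interpolation for $r\le\rho\le 2r$ keeps all relevant radii below $2r<r_0$, where the monotonicity hypothesis applies.

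The second gap is in Step 4. After reducing to $\tilde u\equiv c$ constant with $|c-a|=r$, you only know that $w=u-c$ vanishes on $\partial A$; unique continuation for elliptic systems needs vanishing on a nonempty open subset (or full Cauchy data on a boundary portion), neither of which you have, and the parenthetical appeal to the ray structure $u=a+t(x)(c-a)$ is not carried out --- on the region where $t>r_0$ you have no sign for $\Delta t$, so no maximum-principle conclusion follows. The paper disposes of the case $|u-a|\ge r$ on all of $A$ completely and differently: if in addition $\rho\le 2r$ everywhere, the competitor $\hat u=r\nu$ has \emph{strictly} smaller energy because the term $\int_A|\nabla\rho|^2\,dx$ is dropped and is positive (since $\rho=r$ on $\partial A$ but $\rho(x_0)>r$); if instead $\rho>2r$ somewhere, the paper's competitor vanishes on a set of positive measure, so unique continuation forces $\tilde u\equiv 0$, contradicting $\tilde u=u$ on $\partial A$. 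You would need to replace your competitor by one with such a cutoff and your Step 4 by an argument of this kind for the proof to close.
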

\begin{proof}
Firstly, by standard elliptic regularity theory, we have that $u$ is
smooth in $A$ and continuous up to the boundary (under reasonable
assumptions on $\partial A$). Without loss of generality, we take
$a=0$. As in \cite{alikakosPreprint}, we set
\[
\rho(x)=|u(x)|\ \ \textrm{and}\ \ \nu(x)=\frac{u(x)}{\rho(x)}
\]
on $A_+=\{x\in A\ :\ \rho>0 \}$. We also set $A_0=\{x\in A\ :\
\rho=0 \}$. It has been shown in \cite{alikakosPreprint} that the
energy of $u$ equals
\[
J_A(u)=\frac{1}{2}\int_{A}^{}|\nabla
\rho|^2dx+\frac{1}{2}\int_{A_+}^{}\rho^2 |\nabla \nu|^2
dx+\int_{A}^{}W(\rho \nu)dx.
\]
Let
\[
\tilde{u}(x)=\left\{\begin{array}{ll}
                      \textrm{min}\left\{\rho(x),r \right\}\alpha\left(\rho(x) \right)\nu(x), & x\in A_+\cap \{\rho<2r \}, \\
  &   \\
                      0, & x\in A_0\cup \{\rho \geq 2r \},
                    \end{array}
 \right.
\]
where $\alpha(\cdot)$ is the auxiliary function
\[
\alpha(\tau)=\left\{\begin{array}{ll}
                      1, & \tau\leq r, \\
                        &   \\
                      \frac{2r-\tau }{r},  & r\leq \tau \leq 2 r, \\
                        &   \\
                      0, & \tau \geq 2 r.
                    \end{array}
 \right.
\]
It was shown in \cite{alikakosPreprint} that $\tilde{u}\in
W^{1,2}(A;\mathbb{R}^m)\cap L^\infty(A;\mathbb{R}^m)$ and that its
energy equals
\[
J_A(\tilde{u})=\frac{1}{2}\int_{A}^{}|\nabla
\tilde{\rho}|^2dx+\frac{1}{2}\int_{\tilde{A}_+}^{}\tilde{\rho}^2
|\nabla \nu|^2 dx+\int_{A}^{}W(\tilde{\rho} \nu)dx,
\]
where $\tilde{\rho}(x)=|\tilde{u}(x)|$ and $\tilde{A}_+=\{x\in A\ :\
\tilde{\rho}>0 \}$. Note that
\begin{equation}\label{eqcontra}
u=\tilde{u}\ \ \textrm{on}\ \ \partial A\ \ \textrm{and}\ \
|\tilde{u}|\leq r\ \ \textrm{a.e.\ in}\ \ A.
\end{equation}
It follows readily that
\[
J_A(\tilde{u})\leq J_A(u),
\]
see also the proof in \cite{alikakosPreprint}. Consequently,
$\tilde{u}$ is also a minimizer subject to the same boundary
conditions as $u$. It follows that $\tilde{u}$ is smooth and
satisfies
\[
\Delta \tilde{u}=\nabla W(\tilde{u})\ \ \textrm{in}\ \ A.
\]

Suppose, to the contrary, that
\begin{equation}\label{eqcontra2}
|u(x_0)|>r\ \ \textrm{for some}\ \ x_0\in A.
\end{equation}
We will first exclude the case
\[
r\leq \rho(x)\leq 2r\ \ \textrm{for all}\ \ x\in A.
\]
If not, the function
\[
\hat{u}=r\nu(x)\in W^{1,2}(A;\mathbb{R}^m)\cap
L^\infty(A;\mathbb{R}^m)
\]
would have strictly less energy then $u$ (because
$\int_{A}^{}|\nabla \rho|^2dx>0$) while $\hat{u}=u$ on $\partial A$,
which is impossible. Next, we exclude entirely the case
\[
r\leq \rho(x),\ \ x\in A.
\]
If not, there would exist $x_1\in A$ such that $\rho(x_1)>2r$. This
implies that $\tilde{u}=0$ on a set of positive measure containing
$x_1$. Since $\nabla W(u)$ is locally Lipschitz continuous, we see
that $\tilde{u}$ satisfies the linear system
\[
\Delta \tilde{u}=Q(x)\tilde{u}\ \ \textrm{in}\ \ A, \ \
\textrm{where} \ \ Q(x)=\int_{0}^{1}\partial^2W(t u)u dt\ \
\textrm{is bounded in norm}.
\]
On the other hand, because $\tilde{u}=0$ on a set of positive
measure, by the unique continuation principle for linear elliptic
systems (see \cite{hormander}), we infer that $\tilde{u}\equiv 0$
which is clearly impossible (otherwise $|u|\geq 2r$ in $A$).
Therefore, we may assume that there exists a set $B\subset A$ with
positive measure such that
\[
u=\tilde{u}\ \ \textrm{in}\ \ B.
\]
As before, by considering the linear system for the difference
$u-\tilde{u}$, we conclude that $\tilde{u}\equiv u$. We have thus
arrived at a contradiction, because of (\ref{eqcontra}) and
(\ref{eqcontra2}).
\end{proof}

\end{document}